\numberwithin{equation}{section}
\newtheorem{theorem}{Theorem}[section]
\newtheorem{lemma}[theorem]{Lemma}
\newtheorem{remark}[theorem]{Remark}
\def\bR  {\mathbb{R}}
\def\del{\partial }
\def \la {\langle_x}
\def \ra {\rangle}
\def\bR  {\mathbb{R}}
\def\del{\partial }
\def \la {\langle}
\def \ra {\rangle_x}
\date{June 7, 2018}
\begin{document}

\title{On the Extension of Onsager's Conjecture for General Conservation Laws}
\author{Claude Bardos\footnotemark[1] \quad Piotr Gwiazda\footnotemark[2] \quad Agnieszka \'Swierczewska-Gwiazda\footnotemark[3] \quad Edriss S.\, Titi\footnotemark[4] \, \and \,  Emil Wiedemann\footnotemark[5]}
\maketitle

\begin{abstract}

The aim of  this work   is to extend and prove the Onsager conjecture for a class of conservation laws that possess generalized entropy. One of the main findings of this work is the ``universality" of the Onsager exponent, $\alpha > 1/3$, concerning the regularity of the solutions, say in $C^{0,\alpha}$, that guarantees the conservation of the generalized entropy; regardless of the structure of the genuine nonlinearity in the underlying system.

\end{abstract}
{\bf Keywords:} {Onsager's conjecture, conservation laws, conservation of entropy.} \\
 {\bf MSC Subject Classifications:} {35Q31.}

\tableofcontents

\renewcommand{\thefootnote}{\fnsymbol{footnote}}

\footnotetext[1]{%
Laboratoire J.-L. Lions, BP187, 75252 Paris Cedex 05, France. Email:
claude.bardos@gmail.com}

\footnotetext[2]{%
Institute of Mathematics, Polish Academy of Sciences, \'Sniadeckich 8, 00-656 Warszawa, Poland.  Email: pgwiazda@mimuw.edu.pl}

\footnotetext[3]{%
Agnieszka \'Swierczewska-Gwiazda:  Institute of Applied Mathematics and Mechanics, University of Warsaw, Banacha 2, 02-097 Warszawa, Poland.  Email: aswiercz@mimuw.edu.pl}

\footnotetext[4]{%
Department of Mathematics,
                 Texas A\&M University, 3368 TAMU,
                 College Station, TX 77843-3368, USA. Also, Department of Computer Science and Applied Mathematics, The Weizmann Institute of Science, Rehovot 76100, Israel. Email: titi@math.tamu.edu \, and \, edriss.titi@weizmann.ac.il}

\footnotetext[5]{%
 Institute of Applied Mathematics, Leibniz University Hannover, Welfengarten~1, 30167 Hannover, Germany. Email: wiedemann@ifam.uni-hannover.de}

\section{Introduction}

In this work  we aim at extending and proving  the Onsager conjecture for a class of conservation laws that admit a  generalized entropy. Roughly speaking, the Onsager conjecture \cite{ON} states that weak solutions of the three-dimensional Euler equations of inviscid incompressible flows conserve energy if the velocity field $u\in C^{0,\alpha}\,,$ for $\alpha > \frac13\,,$ and that the critical exponent $\alpha =\frac13$ is sharp. This conjecture has been the subject of intensive investigation for the last two decades. The sufficient condition direction was proved by Eyink \cite{GEY} for the case when  $\alpha > \frac12\,.$ Later, a complete proof was established by Constantin, E and Titi \cite{CET} (see also \cite{CCFS}) under slightly weaker regularity assumptions on the solution which involve a similar exponent $\alpha > \frac13\,.$  Duchon and Robert \cite{DuchonRobert} have shown, under similar sufficient conditions to those in \cite{CET}, a local version of the conservation of energy. It is worth mentioning that the above results are established in the absence of physical boundaries, i.e., periodic boundary conditions or the whole space. However, due to the well recognized  dominant role of the boundary in the generation of turbulence (cf.\, \cite{BT} and references therein) it seems very reasonable  to investigate the analogue of the Onsager conjecture in bounded domains. Indeed, for the three-dimensional Euler equations in a smooth bounded domain $\Omega$, subject to no-normal flow (slip) boundary conditions, it has been shown in \cite{BT2} that a weak solution conserves the energy provided the velocity field  $u\in C^{0,\alpha}(\overline{\Omega})\,,$ for $\alpha > \frac13\,,$    (see also \cite{RRS} for the case of the upper-half space under stronger conditions on the pressure term). A local version, analogue to that of \cite{DuchonRobert}, was established recently in \cite{BTW} under slightly weaker conditions to those in \cite{BT2}, but at the expense of additional sufficient conditions concerning the vanishing behavior of the energy flux near the boundary.

Showing the sharpness of the  exponent $\alpha = \frac13$ in Onsager's conjecture turns out to be much more subtle. This direction has been underlined  by a series of contributions
(cf.\, Isett \cite{Isett}, Buckmaster, De Lellis ,  Sz\'ekelyhidi and  Vicol  \cite{BDSV} and references therein) where  weak solutions,   $u\in C^{0,\alpha}\,,$ with $\alpha <\frac 13$, that  dissipate energy were constructed using the convex integration machinery. Notice, however,  that there exists a family of weak solutions to the three-dimensional Euler equations, that are not  more regular than $L^2$,  and which conserve the energy, cf. \cite{BT-Shear}.

It is most natural to ask whether the analogue of the Onsager conjecture is valid for other systems of conservation laws. Indeed, there has been some intensive  recent work  extending  the Onsager conjecture for other physical  systems, in the absence of physical boundaries, see, e.g.,  \cite{AW,DrEy,FGSW,GMS,Yu} and references therein. In this paper we consider systems of conservation laws with physical boundaries. We use the approach of \cite{DuchonRobert}, as it has been outlined and extended in \cite{BTW} in the presence of physical boundaries, to establish the local conservation of ``generalized entropies" (conserved quantities, which are not necessarily convex) for systems of conservation laws which possess such generalized entropies. This is accomplished provided the underlying weak solutions are locally in $C^{0,\alpha}\,,$ for $\alpha > \frac13\,.$ One of the  primary findings of this work  is the universality of the Onsager critical exponent $\alpha = \frac13\,,$ regardless of the structure of the genuine nonlinearity in the underlying system. Notably, in a forthcoming paper \cite{BGSTW} we will show the extension of these results, and provide additional explicit physical examples,  using more delicate harmonic analysis tools and function spaces. In particular, we will show similar results  under slightly weaker regularity assumptions of the solutions which are required to belong to some ``exotic" function spaces with exponent $\alpha = \frac13\,.$

\section{Local and global generalized entropies conservation}
In this section we state and prove our main result. In the first subsection we establish  the local entropy conservation for any weak solution that belongs to the  H\"older space $C^{0,\alpha}$ with $\alpha > 1/3\,.$ In the second subsection we state a fundamental Lemma concerning nonlinear commutator estimate of Friedrichs mollifier, and in the last subsection we state additional sufficient conditions for establishing the global entropy conservation.

\subsection{Local entropy conservation}
Let  $Q\subset\bR^{d+1}$ be an open set, and consider in $Q$ the following system of conservation laws:
\begin{equation}
\sum_{0\le i \le d} \del_{x_i} A_i(u)=0\,, \label{eq1}
\end{equation}
where $x\mapsto u(x) $ is the unknown vector field   defined in $Q $ with values in  an open convex set  $\mathcal M\subset\subset   \bR^{k}\,, $ while the vectors  $A_i\,,i=0,1,\cdots,d\,$, are $C^2$ vector-valued functions defined in   $\mathcal M$ with values in $\bR^{l}$, where $A_i^j$, for $ j= 1, \cdots, l\,,$ denotes the $j$-$th$ component of $A_i\,$.

\begin{theorem}\label{new1}
Suppose that $B:\mathcal M\mapsto \mathcal L (\bR^{l};\bR) \,$ is a $C^1$ map, represented by a row vector $ B(u)=(b_1(u),b_2(u),\cdots, b_{l}(u))$, defines a
  {\it generalized entropy} , i.e., for every $ i=0,1,\cdots, l$ there exists a $C^2$ flux $q_i:\mathcal M\mapsto\bR$ such that one has:
\begin{equation}\label{g-entropy}
B(u) \cdot\nabla_u A_i(u)=\nabla_u q_i(u), \, \hbox{for}\, \,i=0,1,\cdots,d\,.
\end{equation}
Suppose that $u$ is a weak solution of (\ref{eq1}). Moreover, supposed that for every $\mathcal K \subset\subset Q$ there exists $\alpha >\frac13\,,$  which might depend on $\mathcal K$, such that $u\in C^{0,\alpha}(\mathcal K)$\,.  Then the following equation holds in $\mathcal D'(Q)$
\begin{equation}\label{conservation}
\sum_{0\le i\le d} \del_{x_i} q_i(u)=0\,.
\end{equation}
\end{theorem}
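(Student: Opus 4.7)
The plan is to adapt the Duchon--Robert and Constantin--E--Titi mollification argument to this abstract setting. Fix a standard symmetric mollifier $\varphi_\eps$ supported in the ball of radius $\eps$ and write $f_\eps := f \ast \varphi_\eps$. On the open subset $Q_\eps := \{x \in Q : \mathrm{dist}(x,\partial Q) > \eps\}$, convolving \eqref{eq1} with $\varphi_\eps$ yields the classical identity $\sum_i \del_{x_i}(A_i(u))_\eps = 0$. Adding and subtracting $A_i(u_\eps)$ and setting $r^i_\eps := (A_i(u))_\eps - A_i(u_\eps)$, I rewrite this as
\begin{equation*}
\sum_i \del_{x_i} A_i(u_\eps) \;=\; -\sum_i \del_{x_i} r^i_\eps,
\end{equation*}
so that the left-hand side is now a classical expression in the smooth function $u_\eps$, which still takes values in $\cM$ by convexity.

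Next, I multiply this identity by the row vector $B(u_\eps)$ and apply the chain rule together with the entropy identity \eqref{g-entropy}:
$$
B(u_\eps)\cdot \del_{x_i} A_i(u_\eps) \;=\; B(u_\eps)\,\nabla_u A_i(u_\eps)\,\del_{x_i} u_\eps \;=\; \nabla_u q_i(u_\eps)\cdot \del_{x_i} u_\eps \;=\; \del_{x_i} q_i(u_\eps).
$$
This gives, pointwise on $Q_\eps$, the identity
$$
\sum_i \del_{x_i} q_i(u_\eps) \;=\; -\sum_i B(u_\eps)\cdot \del_{x_i} r^i_\eps.
$$
Since $u_\eps \to u$ uniformly on compact subsets of $Q$ and each $q_i$ is continuous, the left-hand side converges to $\sum_i \del_{x_i} q_i(u)$ in $\mathcal D'(Q)$, so the theorem reduces to showing that the right-hand side tends to zero distributionally.

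To handle the right-hand side, I pair with an arbitrary test function $\psi \in C_c^\infty(Q)$, which is eventually supported in $Q_\eps$, and integrate by parts:
$$
\int_Q \psi\, B(u_\eps)\cdot \del_{x_i} r^i_\eps\, dx \;=\; -\int_Q (\del_{x_i}\psi)\, B(u_\eps)\cdot r^i_\eps\, dx \;-\; \int_Q \psi\,\bigl(\nabla_u B(u_\eps)\,\del_{x_i}u_\eps\bigr)\cdot r^i_\eps\, dx.
$$
The first integral is controlled by the quadratic commutator estimate announced in Section~2.2: Taylor-expanding $A_i$ around $u_\eps(x)$ and using $|u(x-y)-u(x)| \lesssim |y|^\alpha$ on $\mathrm{supp}\,\psi$ yields $\|r^i_\eps\|_{L^\infty(\mathrm{supp}\,\psi)} \lesssim \eps^{2\alpha}$, so this term is $O(\eps^{2\alpha})$. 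The second integral uses, in addition, the classical mollifier bound $\|\del_{x_i} u_\eps\|_{L^\infty(\mathrm{supp}\,\psi)} \lesssim \eps^{\alpha-1}$ for H\"older functions, giving $O(\eps^{3\alpha-1})$. Both contributions vanish as $\eps \to 0$ precisely when $\alpha > \tfrac13$, and the theorem follows.

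The main obstacle, and the heart of the argument, is the quadratic bound $\|r^i_\eps\|_{L^\infty} \lesssim \eps^{2\alpha}$: from the Lipschitz character of $A_i$ alone one would only expect $O(\eps^\alpha)$, and the extra factor of $\eps^\alpha$ comes from the symmetry $\int y\,\varphi_\eps(y)\,dy = 0$ that kills the first-order term in the Taylor expansion of $A_i$. This is the universal Constantin--E--Titi mechanism, and its insensitivity to the particular structure of $A_i$ and of the entropy pair $(B,q_i)$ is exactly why the Onsager threshold $\alpha > \tfrac13$ emerges uniformly for every system fitting the framework of \eqref{eq1}--\eqref{g-entropy}.
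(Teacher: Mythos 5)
Your proposal is correct and follows essentially the same route as the paper: mollify the equation, contract with $B(u_\eps)$, use the entropy identity \eqref{g-entropy} to turn the principal part into $\sum_i\del_{x_i}q_i(u_\eps)$, and kill the commutator remainder by combining the quadratic estimate $\|r^i_\eps\|_{L^\infty}\lesssim\eps^{2\alpha}$ with $\|\del_{x_i}u_\eps\|_{L^\infty}\lesssim\eps^{\alpha-1}$ to get $O(\eps^{3\alpha-1})$ --- precisely the paper's $J_\eps$/$K_\eps$ decomposition together with its nonlinear commutator Lemma. One small correction to your closing remark: the cancellation of the first-order Taylor term does not come from the symmetry $\int y\,\rho_\eps(y)\,dy=0$, but only from $\int\rho_\eps=1$, since expanding about $u_\eps(x)$ makes the linear term $\nabla_u A_i(u_\eps(x))\cdot\int\bigl(u(x-y)-u_\eps(x)\bigr)\rho_\eps(y)\,dy$ vanish identically (equivalently, mollification commutes with affine maps).
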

\begin{remark}
(i) Observe that since $\mathcal M$ is assumed to be an open convex, and hence simply-connected, set then the generalized entropy condition (\ref{g-entropy}) is  equivalent to the relation:
\begin{equation}
\sum_{1\le j \le l} \frac{\del b_j}{\del u_\beta }\frac{\del A_i^j}{\del u_\alpha}=\sum_{1\le j \le l} \frac{\del b_j}{\del u_\alpha }\frac{\del A_i^j}{\del u_\beta}\,, \quad
\hbox{for all} \,\, \alpha, \beta =1,2,\ldots,l\,, \,\,   \hbox{and} \,\, i=1,2,\ldots,d\,.
\end{equation}
For more  details about entropy cf. \cite{DA} , Chapters 3 and 5.
\noindent (ii) Notice that equation (\ref{conservation}) is, in  a sense, the analogue of the local conservation of energy for  the three-dimensional incompressible Euler equations as presented in \cite{DuchonRobert} (see also \cite{BTW}).
\end{remark}

The proof of Theorem \ref{new1} is an extension of the ideas introduced in  Bardos, Titi  and Wiedemann \cite{BTW}.
What has to be proven is that for any $\phi\in \mathcal D(Q)$ one has:
\begin{equation}
\sum_{0\le i\le d}\la \del_{x_i}\phi ,q_i(u) \ra =0\,.
\end{equation}
The support of $\phi$ being given one introduces three  open sets $Q_i$ such that  $\hbox {supp} \, \phi \subset \subset Q_1 \subset \subset Q_2 \subset \subset Q_3\subset\subset Q$
and such   for   $i=1,2,$ one has
$$
\sup_{x\in \Omega_i, y\in \bR^{d+1}\backslash \Omega_{i+1} }|x-y|>\epsilon_0\,,
$$
for some $\epsilon_0 >0$ that depends on the support of $\phi$ and   $Q\,.$

Then one introduces a $C^\infty $ cutoff function $I:\bR^{d+1} \mapsto [0,1]\,,$ which is zero outside $Q_3$ and is equal to $1$ on $Q_2$\,. For any distribution $T\in \mathcal D'(Q)$ one denotes by $\overline T\in \mathcal{D}'(\bR^{d+1})$ the distribution defined for every $\psi \in \mathcal D(\bR^{d+1})$ by the formula
\begin{equation}
\la \psi,\overline T\ra= \la I\psi ,T\ra\,.
\end{equation}
Eventually,  we use standard $C^\infty (\bR^{d+1})$ radially symmetric compactly supported Friedrichs mollifier $x\mapsto \rho_\epsilon(x)\,,$ with support inside the ball of radius $\epsilon >0$. For any distribution $T\in \mathcal D' (\bR^{d+1})$, we define the distribtion $T^\epsilon := T*\rho_\epsilon$. Next, we fix  $\epsilon\in (0,\frac{\epsilon_0}2)\,,$ which we will eventually let it tend to zero.

Observe that from (\ref{eq1}) one infers that
\begin{equation} \label{eq2}
\sum_{0\le i \le d} \del_{x_i} A_i(\overline{u})=0\,, \quad \hbox{in} \quad  \mathcal D(Q_2)\,.
\end{equation}
Notice that for any  $\Psi\in C_{\mathrm c}^2(Q_1;\mathcal L(\bR^l,\bR))$ one has that $\Psi^\epsilon \in \mathcal D(Q_2;\mathcal L(\bR^l,\bR))$. Therefore, as a result of (\ref{eq2}) one has
 \begin{equation}\label{Extension1}
 \begin{aligned}
&0=  \la \Psi^\epsilon ,\sum_{0\le i\le d}  \del_{x_i} A_i(\overline{u})\ra= -\sum_{0\le i\le d}\la (\del_{x_i}\Psi)^\epsilon ,    A_i(\overline {u})  \ra =
\sum_{0\le i\le d} \int_{\bR_x^{d+1}} \del_{x_i}\Psi (x) \cdot (A_i(\overline {u}))^\epsilon (x)\, dx
\\
&
=- \sum_{0\le i\le d}  \int_{\bR_x^{d+1}}    \del_{x_i} \Psi (x) \cdot  A_i((\overline u )^\epsilon)(x)\, dx
- \sum_{0\le i\le d} \int_{\bR_x^{d+1}}   \del_{x_i} \Psi (x) \cdot \Big( (A_i(\overline u))^\epsilon (x)   -A_i((\overline u)^\epsilon) (x) \Big) \, dx \,.
\end{aligned}
\end{equation}

Now,  we replace  $\Psi$,   in (\ref{Extension1}),  by 	$\phi B ((\overline u)^\epsilon) \in C_{\mathrm c}^2(Q_1; \mathcal L (\bR^l,\bR))$. Thus, the right-hand side  of (\ref{Extension1}) is  the sum of two terms:
\begin{equation}
J_\epsilon =    - \sum_{0\le i\le d}  \int_{\bR_x^{d+1}}     \del_{x_i}(\phi B((\overline u)^\epsilon) (x) \cdot  A_i((\overline u)^\epsilon ))(x) \, dx \,,
\end{equation}
and
\begin{equation}
K_\epsilon=\sum_{0\le i\le d}  \int_{\bR_x^{d+1}}   \del_{x_i}(\phi B ((\overline u)^\epsilon)(x)  \cdot \Big(A_i((\overline u)^\epsilon )(x) - ( A_i(\overline u))^\epsilon (x) \Big)\, dx\,.
\end{equation}
Thanks to (\ref{g-entropy}) one has for $J_\epsilon$:
\begin{equation}\label{Jepsilon}
\begin{aligned}
&J_\epsilon=- \sum_{0\le i\le d}  \int_{\bR_x^{d+1}}    \del_{x_i}(\phi B( (\overline u)^\epsilon))(x) \cdot  A_i((\overline u)^\epsilon )(x) \, dx \\
&= \sum_{0\le i\le d} \int_{\bR_x^{d+1}}    (\phi B( (\overline u)^\epsilon))(x)  \cdot \del_{x_i}  A_i((\overline u)^\epsilon )(x) \, dx =\\
& \sum_{0\le i\le d} \int_{\bR^d}  \Big[ \phi (B( \eta (\overline u)^\epsilon)) \cdot ( \nabla_u A_i((\overline u)^\epsilon )\cdot \del_{x_i}(\overline u)^\epsilon \Big ]dx
 = \\
 &\sum_{0\le i\le d}  \int_{\bR^d}  \phi  (x)  \del_{x_i} q_i((\overline u)^\epsilon(x)) dx =
-\sum_{0\le i\le d}  \int_{\bR^d}  \del_{x_i}\phi  (x)   q_i((\overline u)^\epsilon(x)) dx \,.
\end{aligned}
\end{equation}
Since $u\in C^{0,\alpha}(Q_2)\,,q_i\in C^2(\mathcal M)$  and $\phi\in \mathcal D(Q_1)\,,$  then by virtue of the Lebesque Dominant Convergence theorem this last term in (\ref{Jepsilon}) converges, when $\epsilon\rightarrow 0\,,$ to
$$
-\sum_{0\le i\le d} \int_Q \del_{x_i} \phi(x) q_i(u)dx=\sum_{0\le i\le d}\la \phi , \del_{x_i} q_i(u) \ra\,.
$$

To complete the  proof one has to show that for $\alpha>\frac13$ the term $K_\epsilon$ converges to $0\,,$ as $\epsilon\rightarrow 0\,.$
Obviously, one has:
\begin{equation}\label{bad-exponent}
\| \del_{x_i}(\phi B((\overline u)^\epsilon))\|_{L^\infty(Q_1)} \le   \|\phi\|_{C^1(Q_1)} \| B\|_{C^0(\mathcal M)} +
C \|\phi\|_{L^\infty(Q_1)}\|B\|_{C^2(\mathcal M)} \|u\|_{C^{0,\alpha}(Q_2)}\epsilon^{\alpha-1}\,.
\end{equation}
Therefore the proof is completed by virtue of the following estimate.
\begin{equation}\label{basic0}
\|((A_i((\overline u)^\epsilon ) - (A_i(\overline u))^\epsilon)\|_{L^\infty(Q_1)} \le C \|A_i\|_{C^2(\mathcal M)} \|u\|^2_{C^{0,\alpha}(Q_2)}\epsilon^{2\alpha}\,,
\end{equation}
which we will establish in the next section.

\subsection{Nonlinear commutator estimate for Friedrichs mollifier}
Since $\overline u $ in (\ref{basic0}) belongs to $C^{0,\alpha}(\bR^{d+1})$, with the H\"older exponent $\alpha$ that corresponds to the compact set $\overline{Q_3}$, and is  compactly supported in $Q_3$, estimate (\ref{basic0}) follows from the following more ``general lemma" concerning nonlinear commutator estimate for the Friedrichs mollifier (see also \cite{GMS}). Notably, the estimate below, which generalizes those established for quadratic nonlinearities in \cite{CET}  and \cite{BT2},  shows that regardless of the structure of the nonlinearity one always obtains the same exponent for $\epsilon$, i.e. $2\alpha$ in (\ref{basic0}). Combining (\ref{basic0}) with  (\ref{bad-exponent}) implies the ``universality" of the Onsager exponent, i.e., $\alpha > 1/3$, for conservation laws.

\begin{lemma}\label{basic-lemma} For any $F\in C^2(\mathcal M; \bR^l)$  and for any compactly supported function  $v \in C_{\mathrm {c}}^{0,\alpha}(\bR^{d+1}; \mathcal M)$  one has:
\begin{equation}\label{basic1}
\|(F(v))^\epsilon-F(v^\epsilon)\|_{L^\infty} \le   C(\|F\|_{C^2(\mathcal M)})\|v\|^2_{C^{0,\alpha}(\bR^{d+1})}
\epsilon^{2\alpha}\,.
\end{equation}
\end{lemma}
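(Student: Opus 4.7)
The proof hinges on a pointwise Taylor expansion trick that cancels the linear part of $F$ against the averaging by $\rho_\epsilon$, leaving a quadratic remainder that carries the $\epsilon^{2\alpha}$ gain. The plan is as follows.

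First I would rewrite the commutator in a form convenient for Taylor expansion. Using that $\int_{\bR^{d+1}}\rho_\epsilon(y)\,dy=1$, for every $x\in\bR^{d+1}$ we have
\begin{equation*}
(F(v))^\epsilon(x)-F(v^\epsilon(x))
=\int_{\bR^{d+1}}\rho_\epsilon(y)\bigl[F(v(x-y))-F(v^\epsilon(x))\bigr]\,dy.
\end{equation*}
I would then Taylor expand $F$ at the base point $v^\epsilon(x)\in\mathcal M$ (which is valid because $\mathcal M$ is convex and $v$ takes values in $\mathcal M$, so the segment joining $v(x-y)$ and $v^\epsilon(x)$ stays in $\mathcal M$):
\begin{equation*}
F(v(x-y))=F(v^\epsilon(x))+\nabla_u F(v^\epsilon(x))\cdot\bigl(v(x-y)-v^\epsilon(x)\bigr)+R(x,y),
\end{equation*}
where $|R(x,y)|\le\tfrac12\|F\|_{C^2(\mathcal M)}\,|v(x-y)-v^\epsilon(x)|^{2}$.

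The key cancellation is that the linear term integrates to zero: since $\int\rho_\epsilon(y)\,v(x-y)\,dy=v^\epsilon(x)$ and $\int\rho_\epsilon(y)\,dy=1$, one has $\int\rho_\epsilon(y)\bigl(v(x-y)-v^\epsilon(x)\bigr)\,dy=0$. Hence only the remainder survives, and
\begin{equation*}
\bigl|(F(v))^\epsilon(x)-F(v^\epsilon(x))\bigr|
\le\tfrac12\|F\|_{C^2(\mathcal M)}\int_{\bR^{d+1}}\rho_\epsilon(y)\,|v(x-y)-v^\epsilon(x)|^{2}\,dy.
\end{equation*}

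Finally I would bound the inner difference by the triangle inequality, $|v(x-y)-v^\epsilon(x)|\le|v(x-y)-v(x)|+|v(x)-v^\epsilon(x)|$. For $|y|\le\epsilon$ (which is the support of $\rho_\epsilon$) the first term is at most $\|v\|_{C^{0,\alpha}}|y|^\alpha\le\|v\|_{C^{0,\alpha}}\epsilon^\alpha$; the second obeys the same bound by writing $v(x)-v^\epsilon(x)=\int\rho_\epsilon(z)(v(x)-v(x-z))\,dz$. Squaring and inserting into the previous display gives the claimed $C\|F\|_{C^2(\mathcal M)}\|v\|_{C^{0,\alpha}}^{2}\epsilon^{2\alpha}$ bound, uniformly in $x$.

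No step here is a serious obstacle; the only thing to be careful about is the first-order cancellation, which requires the Taylor expansion to be done at $v^\epsilon(x)$ rather than at $v(x)$. Expanding at $v(x)$ would leave a non-vanishing linear term of order $\epsilon^\alpha$ and lose the crucial doubling of the exponent. The rest is just H\"older regularity of $v$ combined with the fact that $\rho_\epsilon$ is a probability kernel supported in the $\epsilon$-ball, which is exactly why $2\alpha$ appears regardless of the specific nonlinearity $F$.
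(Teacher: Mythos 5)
Your proposal is correct and follows essentially the same route as the paper: Taylor expansion of $F$ about the base point $v^\epsilon(x)$, cancellation of the linear term against the probability kernel $\rho_\epsilon$, and a H\"older bound on $|v(x-y)-v^\epsilon(x)|$ over the $\epsilon$-ball yielding $\epsilon^{2\alpha}$. The only (immaterial) difference is that you estimate $|v(x-y)-v^\epsilon(x)|$ via the triangle inequality through $v(x)$, whereas the paper writes $v^\epsilon(x)=\int v(x-z)\rho_\epsilon(z)\,dz$ and uses $|(x-y)-(x-z)|\le 2\epsilon$ directly.
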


\begin{proof}
First observe that if $F$ is an affine map one has:
\begin{equation}
(F(v))^\epsilon-F(v^\epsilon)=0\,.\label{trivial}
\end{equation}
Therefore, combining (\ref{trivial}) with the  Taylor formula applied to  $F(v(x-y))$, viewed as a function of $y$, about $v_\epsilon(x)$  gives the following estimates:
\begin{equation}\label{basta}
\begin{aligned}
&|(F(v))^\epsilon(x)-F(v ^\epsilon(x))|=|\Big(\int_{\bR^d_y}F(v(x-y))\rho_\epsilon(y) dy\Big)-F(v^\epsilon(x))| =\\
&| \int_{\bR^d_y}(F(v(x-y)) -F(v^\epsilon(x))\rho_\epsilon(y)dy|=\\
&
 | \int_{\bR^d_y}\Big(\int_0^1\Big(\nabla_v^2 F(s v(x-y) +(1-s)v^\epsilon(x))\Big)(1-s)ds\Big)(v(x-y)  - v^\epsilon(x))^{(2)}\rho_\epsilon(y)dy| \le\\
& \|F\|_{C^2(\mathcal M))}\int_{\bR^d_y}|v(x-y) - v^\epsilon(x)|^2\rho_\epsilon(y)dy = \\
&\|F\|_{C^2(\mathcal M))} \int_{\bR^d_y} |v(x-y) - \int_{\bR^d_z} v(x-z) \rho_\epsilon(z) dz|^2\rho_\epsilon(y)dy = \\
&\|F\|_{C^2(M))}\int_{\bR^d_y}|\int_{\bR^d_z}\Big(v(x-y) -  v(x-z)\Big) \rho_\epsilon(z) dz|^2\rho_\epsilon(y)dy\,.
\end{aligned}
\end{equation}
 Since  $x\mapsto\rho_\epsilon(x)$ is equal to zero for $|x|>\epsilon\,$ then in the last term of (\ref{basta}) one has to restrict oneself to the values when $|y|<\epsilon$ and $|z|<\epsilon\,.$ This in turn implies that   $|(x-y)-(x-z)|\le 2\epsilon\,. $ Consequently one has
  \begin{equation*}
  \begin{aligned}
 &\|(F(v))^\epsilon-F(v^\epsilon)\|_{L^\infty} \le \|F\|_{C^2(\mathcal M))} \int_{\bR^d_y}|\int_{\bR^d_z}(2\epsilon)^\alpha \|v\|_{C^{0,\alpha}(\bR^{d+1})} \rho_\epsilon(z) dz|^2\rho_\epsilon(y)dy\\
&\quad =\epsilon^{2\alpha}\|F\|_{C^2(\mathcal M)) } \|v\|_{C^{0,\alpha}(\bR^{d+1})}^2\,.
\end{aligned}
\end{equation*}
 \end{proof}
 \subsection{Sufficient conditions for global entropy conservation}
 Theorem \ref{new1} can obviously be applied to the case of conservation of energy.
 Consider as above a weak solution  $u$ of the following conservation law:
 \begin{equation}
 \del_t A_0(u ) +\sum_{1\le i\le d}  \del_{x_i} (A_i(u))=0 \label{con1}\,,
\end{equation}
and assume that this equation has an extra conservation law $u\mapsto  \eta(u)$ (or entropy as usually called)   with corresponding fluxes $q_0(u)= \eta (u)$ and $q_j(u)\,,$ for $j=1,2,\ldots d\,,$ satisfying
\begin{equation}
\begin{aligned}
&\nabla_u \eta (u)\cdot \nabla_u A_0(u)=\nabla_u \eta(u)\,, \quad \hbox{and}  \\
&\nabla_u \eta (u)\cdot \nabla_u  A_j(u)= \nabla_u q_j(u)\,, \quad \hbox{for}\,\, j=1,2,\ldots d\,. \label{entropy}
\end{aligned}\, \end{equation}
Consequently, the above  gives formally the extra conservation law:
\begin{equation}
\del_t\eta(u) + \sum_{1\le i \le d} \del_{x_i}q_i(u)=0.
\end{equation}
Then, applying Theorem {\ref{new1} with, $x_0=t $ and $(B(u)=\nabla_u \eta(u))$ one has the following:
 \begin{theorem}
  Suppose that $u\in L^\infty( Q)$  is defined in a time cylindrical domain $Q=(T_1,T_2) \times \Omega\,.$ Suppose also  that $\Omega$ is a bounded open set with a Lipshitz  boundary $\del \Omega$ . This implies, in particular, the existence of   $\delta_0>0$ such that for every $x\in \Omega$ with  $d(x,\del\Omega)<\delta_0$ the function $x\mapsto d(x,\del\Omega)$ is Lipschitz and that there exists a unique point $ \hat x \in \del\Omega\,,$ depending on $x\,,$ such that
 \begin{equation}
 d(x,\del\Omega) = |x-\hat x|\quad \hbox{and} \quad \nabla_x d(x,\del\Omega) = -\vec n(\hat x) \,.
 \end{equation}

 Suppose that $u$ is a weak solution of (\ref{con1}) with the following properties:

 1. For any $\tilde Q \subset\subset Q$ there exists $\alpha>\frac13\,,$ which might depend on $\tilde Q\,,$ such that $u  \in C^{0,\alpha} (\tilde Q)\,.$

 2. Let $\delta \in (0,\frac{\delta_0}{2})$, denote by   $\Omega_\delta = \Omega \cap\{ d(x,\del\Omega)<\frac{\delta}{2} \})$ and by $Q_\delta = (T_1,T_2) \times \Omega_\delta$. Then
  \begin{equation}\label{boundary-flux}
\lim_{\delta\rightarrow 0} \sup_{(t,x)\in Q_\delta} | \sum_{1\le i \le d } q_i(u(t,x)) \vec n_i(\hat x) |= 0  \,.
\end{equation}
Then the solution $u$ conserves the total entropy $\eta(u)\,,$ i.e.,  for every $(t_1,t_2)$ satisfying $T_1<t_1\le t_2<T_2$
\begin{equation}
\int_\Omega \eta (u(t_1,x))dx= \int_\Omega \eta (u(t_2,x))dx\,.
\end{equation}
\end{theorem}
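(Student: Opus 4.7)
The plan is to integrate in space and time the local conservation law
\[
\partial_t\eta(u) + \sum_{i=1}^{d}\partial_{x_i}q_i(u) = 0,
\]
which holds in $\mathcal{D}'(Q)$ by Theorem~\ref{new1} applied with $x_0=t$ and $B(u)=\nabla_u\eta(u)$. To this end, for each $\delta\in(0,\delta_0/2)$ pick a smooth $\theta:\RR\to[0,1]$ with $\theta\equiv 0$ on $(-\infty,1/2]$ and $\theta\equiv 1$ on $[1,\infty)$, and define the spatial cutoff $\psi_\delta(x):=\theta(d(x,\partial\Omega)/\delta)$. Then $\psi_\delta\in C_c^\infty(\Omega)$ converges pointwise to $1$ on $\Omega$, and $\nabla\psi_\delta$ is supported in the shell $S_\delta:=\{x\in\Omega:\delta/2\le d(x,\partial\Omega)\le\delta\}\subset\Omega_\delta$. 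The crucial feature, which comes from the hypothesis $\nabla_x d(x,\partial\Omega)=-\vec n(\hat x)$, is that on $S_\delta$
\[
\nabla\psi_\delta(x) = -\frac{1}{\delta}\,\theta'\!\left(\frac{d(x,\partial\Omega)}{\delta}\right)\vec n(\hat x),
\]
so the cutoff's gradient is aligned with the inward normal.

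\smallskip

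Next, let $\chi_\varepsilon\in C_c^\infty((T_1,T_2))$ be a standard smoothing of $\mathbf 1_{[t_1,t_2]}$, and insert the compactly supported test function $\phi(t,x)=\chi_\varepsilon(t)\psi_\delta(x)\in\mathcal{D}(Q)$ into the distributional identity above. This gives
\[
\int_{T_1}^{T_2}\chi_\varepsilon'(t)\,E_\delta(t)\,dt + \int_{T_1}^{T_2}\chi_\varepsilon(t)\,F_\delta(t)\,dt = 0,
\]
where $E_\delta(t):=\int_\Omega\eta(u(t,x))\psi_\delta(x)\,dx$ and $F_\delta(t):=\int_\Omega\sum_i q_i(u(t,x))\,\partial_{x_i}\psi_\delta(x)\,dx$. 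Boundedness of $u$ gives $F_\delta\in L^\infty((T_1,T_2))$, so $E_\delta\in W^{1,\infty}((T_1,T_2))$ with $E_\delta'=-F_\delta$. Letting $\varepsilon\to 0$ and working with the continuous representative of $E_\delta$ yields, for any $T_1<t_1\le t_2<T_2$,
\[
E_\delta(t_2)-E_\delta(t_1) = -\int_{t_1}^{t_2}F_\delta(t)\,dt.
\]

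\smallskip

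Finally, send $\delta\to 0$. Since $\psi_\delta\to \mathbf 1_\Omega$ pointwise and $\eta(u)\in L^\infty(Q)$, dominated convergence gives $E_\delta(t_j)\to\int_\Omega\eta(u(t_j,x))\,dx$ for $j=1,2$. For the flux term, the aligned-gradient formula produces
\[
|F_\delta(t)| \le \frac{\|\theta'\|_{L^\infty}}{\delta}\,|S_\delta|\,\sup_{(s,x)\in Q_\delta}\Big|\sum_{i=1}^d q_i(u(s,x))\,n_i(\hat x)\Big|,
\]
and the Lipschitz regularity of $\partial\Omega$ gives $|S_\delta|\lesssim\delta\,\mathcal{H}^{d-1}(\partial\Omega)$, cancelling the singular $1/\delta$ and leaving a factor that vanishes as $\delta\to 0$ by hypothesis (\ref{boundary-flux}). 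The main obstacle is precisely this cancellation: without orienting $\nabla\psi_\delta$ along the normal one is stuck with the borderline bound $\|q(u)\|_{L^\infty}|S_\delta|/\delta=O(1)$, and the geometric choice above uses (\ref{boundary-flux}) in an essentially optimal way to convert this $O(1)$ bound into an $o(1)$ one.
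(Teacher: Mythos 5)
Your proposal is correct and follows essentially the same route as the paper: apply Theorem \ref{new1} with $x_0=t$ and $B=\nabla_u\eta$, test the resulting distributional identity against a product of a time cutoff and a spatial cutoff of the form $\theta(d(x,\del\Omega)/\delta)$, and kill the boundary-flux term by combining the alignment $\nabla_x d=-\vec n(\hat x)$ with the bound $|S_\delta|\lesssim\delta$ and hypothesis (\ref{boundary-flux}). The only (immaterial) difference is bookkeeping: you fix $\delta$, integrate in time to get $E_\delta(t_2)-E_\delta(t_1)=-\int_{t_1}^{t_2}F_\delta$, and then send $\delta\to0$, whereas the paper first sends $\delta\to0$ for arbitrary $\theta\in\mathcal D((T_1,T_2))$ to conclude $\frac{d}{dt}\int_\Omega\eta(u)\,dx=0$ in $\mathcal D'$ and then invokes continuity in $t$.
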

\begin{proof}
Theorem  \ref{new1}   implies that $u$ satisfies the entropy relation (\ref{conservation}) in the sense of distribution. Hence one considers a test function in $\mathcal D(Q)$ of the following form
$\phi(t,x)= \theta(t)\times \chi \Big(\frac{d(x,\del\Omega)}\delta\Big)\,.$  Here  $\chi \in C^\infty([0,\infty))$ is a cutoff function $\chi : [0,\infty) \to [0,1]$ satisfying  $\chi(s)=0$ for $s\in [0,\frac14]$ and   $\chi(s)=1$ for $s\in [\frac12,\infty)\,;$ and $\theta \in \mathcal {D} ((T_1,T_2))\,.$  Parameter  $\delta$ is chosen to be small enough such that $\mathrm {supp}\, \theta \subset (T_1+ \delta, T_2 - \delta)\,,$ and  $\delta \in(0,\frac{\delta_0}{2})\,.$

Then one has:
\begin{equation}
\begin{aligned}
& 0=\langle \phi ,\del_t\eta(u) + \sum_{1\le i \le d}\del_{x_i}  q_i(u) \rangle_{t,x}= - \int_{Q} \eta ( u(t,x)) \chi\Big (\frac{d(x,\del\Omega) }\delta\Big) \frac{d}{dt}\theta(t) dxdt
 \\
 &\quad \quad\quad\quad\quad\quad\quad\quad\quad-\int_{Q_\delta} \theta (t)\Big ( \sum_{1\le i \le d } q_i(u(t,x)) \vec n_i(\hat x)  \frac 1\delta \chi'((\frac{d(x,\del\Omega)}\delta)\Big )dxdt\,.
\end{aligned}
\end{equation}
Letting  $\delta\rightarrow 0\,,$ then by the Lebesque Dominant Convergence theorem  one obtains first the trivial relation
\begin{equation}
\lim_{\delta \to 0} \int_Q \eta ((u(t,x) ))\chi(\frac{d(x,\del\Omega) }\delta) \frac{d}{dt}\theta(t) dxdt\  = \int_{T_1}^{T_2} \Big(\frac{d}{dt}\theta(t) \int_\Omega \eta(u(t,x))dx\Big) dt\,.
\end{equation}
While for  the term
\begin{equation}
 -\int_{Q_\delta} \theta (t) \Big (\sum_{1\le i \le d } q_i(u(t,x)) \vec n_i(\hat x)  \frac 1\delta \chi'\Big(\frac{d(x,\del\Omega)}\delta\Big )\Big )dxdt\,,
\end{equation}
one uses the estimate:
\begin{equation}
\begin{aligned}
&\Big |\int_{Q_\delta} \theta (t) \Big (\sum_{1\le i \le d } q_i(u(t,x)) \vec n_i(\hat x)  \frac 1\delta \chi'\Big(\frac{d(x,\del\Omega)}\delta\Big)\Big) dxdt\Big |\le\\
&\sup_{(t,x) \in Q_\delta}  \Big| \sum_{1\le i \le d } q_i(u(t,x)) \vec n_i(\hat x)\Big| ~ \Big (\int_{\Omega_\delta} \frac 1\delta \Big |\chi'\Big(\frac{d(x,\del\Omega)}\delta\Big)\Big|dx\Big ) \times \int_{T_1}^{T_2} |\theta (t))| dt \\\le
&C \sup_{(t,x)\in Q_\delta} \Big | \sum_{1\le i \le d } q_i(u(t,x)) \vec n_i(\hat x) \Big|\,.
\end{aligned}
\end{equation}
Thanks to (\ref{boundary-flux}) the right-hand side in the inequality above tends to zero, as $\delta\rightarrow 0\,.$
Hence, from all the above one infers that for every $\theta \in \mathcal D(T_1,T_2)\,,$
\begin{equation}
\int_{T_1}^{T_2} \Big ( \frac{d}{dt}\theta(t) \int_\Omega \eta(u(t,x))dx\Big ) dt =0 \quad \hbox{or} \quad
\frac{d}{dt}  \int_\Omega \eta(u(t,x))dx=0 \quad \hbox {in} \quad \mathcal D'(T_1, T_2).
 \end{equation}
The above implies that $ \int_\Omega \eta(u(t,x))dx = \mathrm {const.}\,,$  for every $t \in (T_1, T_2)$, since $ \int_\Omega \eta(u(t,x))dx$ is a continuous function for all $t \in (T_1, T_2)$. This concludes our proof.
\end{proof}
\begin{remark} Observe that the above theorem can be applied to the incompressible Euler equations
\begin{equation}
\begin{aligned}
&\del_t v + \nabla\cdot (v\otimes v) +\nabla p=0\,,\\
&\nabla \cdot v=0\,.
\end{aligned}
\end{equation}
where $u$ in the above theorem is the column vector $u= \begin{pmatrix} v \\ p\end{pmatrix}\,,$  $\eta(u)=\frac{|v|^2}2\,,$  $A_0(u) = \begin{pmatrix} v  \\ 0   \end{pmatrix}\,, $ and   $A_i(u)=\begin{pmatrix} v_i v + \begin{pmatrix} p\\ p\\p \end{pmatrix} \\ v_i \end{pmatrix}$, for $i=1,2,3\,.$

In the forthcoming paper \cite{BGSTW} we will also provide some additional physical examples for which Theorem \ref{new1} can be applied.
\end{remark}

\begin{remark} We remark that  Lemma \ref{basic-lemma} could be generalized in the spirit of the Lemma 2.1  of \cite{ContiDelellisSzekzlyhidi}  to give
\begin{equation}
\| F(v^\epsilon) -(F(v))^\epsilon\|_{C^r} \le C \|F\|_{C^{r+2}}\|v\|^2_{C^{r+\alpha}} \epsilon^{2\alpha}
\end{equation}
\end{remark}

 \section*{Acknowledgements}
EST would like to thank the \'{E}cole Polytechnique for its kind hospitality, where this work was completed, and the \'{E}cole Polytechnique Foundation for its partial financial support through the 2017-2018 ``Gaspard Monge Visiting Professor" Program. EST was also supported in part by  the ONR grant N00014-15-1-2333, and by the Einstein Stiftung/Foundation - Berlin, through the Einstein Visiting Fellow Program. This work was partially supported by the Simons - Foundation grant 346300 and the Polish Government MNiSW 2015-2019 matching fund.  PG and A\'SG. received support from the National Science Centre (Poland),
2015/18/MST1/00075.

\end{document}